\newcommand{\fcy}{\mathbb}
\newcommand{\mf}{\mathfrak}
\newcommand{\cc}{\mathbb{C}}
\newcommand{\qq}{\mathbb{Q}}
\newcommand{\zz}{\mathbb{Z}}
\DeclareMathOperator{\gal}{Gal}
\DeclareMathOperator{\aut}{Aut}
\DeclareMathOperator{\Hm}{Hom}
\DeclareMathOperator{\sgn}{sgn}
\newtheorem{prop}{Proposition}[section]
\newtheorem{lem}{Lemma}[section]
\newtheorem{prob}{Problem}[section]
\def\zz{\mathbb{Z}}
\def\Q{\mathbb{Q}}
\def\qq{\mathbb{Q}}
\numberwithin{equation}{section}   
\newtheorem{theorem}[equation]{Theorem}
\title[\bf Totally imaginary number fields]
{On counting totally imaginary number fields}
\author{\bf A. Raghuram, \ \ Qiyao Yu}
\date{\today}      
\subjclass[2010]{11R45; 11R16, 11R21, 11R32}
\address{Dept.\,of Mathematics, Fordham University at Lincoln Center, New York, NY 10023, USA.} 
\email{araghuram@fordham.edu}
\address{Dept.\,of Mathematics, Columbia University, 2990 Broadway, New York, NY 10027, USA.}
\email{qy2266@columbia.edu}
\begin{document}

 \begin{abstract}
A number field is said to be a CM-number field if it is a totally imaginary quadratic extension of a totally real number field. 
We define a totally imaginary number field to be of \textbf{CM}-type if it contains a CM-subfield, and of \textbf{TR}-type if it does not contain a CM-subfield. For quartic totally imaginary number fields when ordered by discriminant, we show that about 69.95\% are of 
\textbf{TR}-type and about 33.05 \% are of \textbf{CM}-type. For a sextic 
totally imaginary number field we classify its type in terms of its Galois group and possibly some additional information about the location of complex conjugation in the Galois group. 
\end{abstract} 

\maketitle

\section{Introduction}

The past few decades have seen a huge amount of work on counting number fields; see, for example, \cite{ali-s-v-w}, \cite{bhargava}, \cite{cohen}, 
\cite{dummit}, \cite{ellenberg-venkatesh}; these represent only a tiny sampling of a huge and growing body of related literature which the interested reader can search on MathSciNet. 
In this article, we introduce a new aspect to the general problem of counting number fields by restricting our attention to totally imaginary number fields and then searching for those which contain a CM subfield.

\medskip

A number field will mean a finite extension of $\Q$. A number field 
is said to be a CM-number field if it is a totally imaginary quadratic extension of a totally real number field. A totally imaginary number field is said to be of \textbf{CM}-type if it contains a CM-subfield, and is said to be of \textbf{TR}-type if it does not contain a CM-subfield. One can organize this definition in other ways. 
Let $F$ be a totally imaginary number field; since the compositum of two totally real fields is again totally real, $F$ contains a maximal totally real subfield, say, 
$F_0$. There is at most one totally imaginary quadratic extension $F_1$ of $F_0$ contained inside $F$; if such a CM field $F_1$ exists, then $F$ is of \textbf{CM}-type and $F_1$ is in fact the maximal CM subfield of $F$; otherwise, we say $F$ is of \textbf{TR}-type, and we put $F_1 = F_0.$ This dichotomy of totally imaginary number fields played an important role in \cite{raghuram}. 

\medskip

Let $n \geq 2$ be an even positive integer. To count totally imaginary number fields $F$ of prescribed degree $n=[F:\qq]$ and bounded discriminant $\delta_F\leq X$, define 
$N_n^{\textbf{TI}}(X)$ to be the number of isomorphism classes of totally imaginary number fields $F$ of degree $[F:\qq]=n$, with absolute discriminant $\delta_F\leq X$. 
To count those totally imaginary fields of \textbf{CM}-type, define $N_n^{\textbf{CM}}(X)$ to be the 
number of isomorphism classes of totally imaginary number fields $F$ of $\textbf{CM}$-type of degree $n$ and $\delta_F\leq X$. We would like to know the asymptotics of the ratio
\[
R_n^{\textbf{CM}}(X):=\frac{N_{n}^{\textbf{CM}}(X)}{N_{n}^{\textbf{TI}}(X)}, \quad \mbox{as $X\to\infty$}. 
\]
 The question is trivial for $n = 2$ since an imaginary quadratic field is a CM-field; $R_2^{\textbf{CM}}(X) = 1.$ Henceforth, assume that $n \geq 4.$

\medskip
For quartic extensions ($n=4$), in Theorem~\ref{thm:quartic} we show that $\lim_{X \to \infty} R_4^{\textbf{CM}}(X) \approx 0.33052$. The proof uses the results of 
Bhargava \cite{bhargava} and Cohen--Diaz Y Diaz--Olivier \cite{cohen}. 

\medskip
For sextic extensions ($n=6$), in Theorem~\ref{thm:sextic}, we classify the 
type of a totally imaginary extension in terms of its Galois group and possibly some additional data. If 
$F$ is a sextic totally imaginary extension, and $F_s$ is its Galois closure, 
then the Galois group of $F_s/\Q$ is a transitive subgroup of $S_6$ containing 
a $(2,2,2)$ cycle. There are $11$ possible such subgroups of $S_6$ (up to conjugacy), and for each one of them we can classify the type of $F$. In particular, we show  that if 
$G$ is $S_4,$ $S_5,$ or $S_6$ then $F$ is of \textbf{TR}-type; 
if $G$ is $C_6,$ $S_3$, $D_6,$ $S_3 \times C_3,$ $A_4 \times C_2,$ $S_3 \times S_3,$ or  $C_3^2 \rtimes D_4,$ 
then $F$ is of \textbf{CM}-type, and furthermore in each case one can decide if $F$ itself is a CM field or whether it only contains a proper CM-subfield in terms of the Galois group and its internal structure. The case of $D_6$ offers some piquant features. Finally, the case $G = S_4 \times C_2$ is more complicated because there are totally imaginary fields of either type with this Galois group; identifying an order $2$ element of $G$ that represents complex conjugation serves to distinguish between the two types. Finally, within the framework of currently 
available results on counting number fields we are unable to predict a precise value for  $\lim_{X \to \infty} R_6^{\textbf{CM}}(X).$ Even assuming Malle's conjecture (see \cite[Conj.\,1.1]{malle})
on counting number fields it seems difficult to predict a value because of examples like the $S_4 \times C_2$ case, and also because we do not apparently know--even conjecturally--the constants of proportionality denoted $c(k,G)$ for $k = \qq$ in \cite{malle}. 

\bigskip

\noindent
\textit{Acknowledgements:} AR is grateful to the warm hospitality of the Math Department of Columbia University where much of this work was carried out.

\bigskip
\section{Preliminaries}

\subsection{Notations}
The cylic group of order $n$ is denoted $C_n.$ The Klein $4$-group $C_2 \times C_2$ is denoted $V_4.$ 
The dihedral group of order $2n$ is $D_n = \{1,r,r^2,\dots, r^{n-1}\} \rtimes \{1,s\}$, with 
$r^n = s^2 = 1$ and $srs = r^{-1}.$
The permutation group on $n$ letters, usually taken to be $\{1,2,\dots,n\}$, 
is denoted $S_n$; and $A_n$ denotes the alternating group--the subgroup of $S_n$ of all even permutation.

\subsection{Examples}
A number field will always mean a finite extension of $\Q$, and by its Galois group one means the Galois group of its normal closure. 
There are several families of examples of totally imaginary number fields of the two types as will become clear throughout the paper. We found it especially helpful to search the LMFDB (the $L$-functions and modular forms database \cite{lmfdb}) for number fields with prescribed constraints, such as degree, signature, Galois group, and whether or not it is a CM field. 
For example, we can set the degree to be $4$, the signature as 
$[0,2]$, the Galois group as $S_4$ or also by its label 4T5 (the fifth group amongst the transitive subgroups of $S_4$) and then upon hitting search we get a list ordered by discriminant of quartic totally imaginary number fields with Galois group $S_4$--the first item in this list is the number field labelled as 4.0.229.1; the parameters in the label stands for degree, number of real embeddings, absolute discriminant, and the last one is item number for all 
the number fields with the given first three parameters. As another example, if we search for number fields with degree $6$, 
signature $[0,3]$, and Galois group $A_6$ (label 6T15), then we get `no matches'. Indeed, if $F$ is a number field of signature $[r_1,r_2]$, i.e,
$r_1$ real embeddings and $r_2$ pairs of complex embeddings, and suppose 
$r_2$ is odd then $A_n$ cannot be the Galois group of $F$, where $n = r_1 + 2r_2$; see Lem.\,\ref{lem:A_n_not_Galois}.

\bigskip
\section{Quartic extensions}

In this section, we study the case of quartic number fields; $n = 4.$

A number field is said to be primitive if it admits no intermediate subfields. Clearly, a totally imaginary primitive number field is of {\bf TR}-type. 
The number field labelled 4.0.229.1 in LMFDB mentioned above is an example of a totally imaginary quartic primitive field and hence of \textbf{TR}-type. 
Given a totally imaginary quartic extension $F/\qq$, it is either primitive or contains a quadratic subfield. If $F$ contains a real quadratic extension, then $F$ is a CM-field; and if $F$ contains an imaginary quadratic extension--which is necessarily a CM-field then $F$ is of \textbf{CM}-type. Hence, if $F$ is not primitive then it is of \textbf{CM}-type; whence: 
\[F\text{ is of \textbf{TR}-type} \ 
\ \iff \
F\text{ is primitive}.\]

Let $F$ be a quartic totally imaginary number field and $F_s$ its Galois closure. Then the Galois group $G = \gal(F_s/\qq)$ is a transitive subgroup of $S_4$ via its action on 
the set of $4$ roots of an irreducible quartic polynomial of a primitive element generating $F$. Up to isomorphism, $G$ is one of 
$C_4,$ $V_4,$ $D_4,$ $A_4,$ or $S_4$. Let us consider each case. 
If $G$ is $C_4$ or $V_4$ then it is not primitive and hence of \textbf{CM}-type. If $G$ is $D_4$ then $H = \gal(F_s/F)$ is of order $2$, and any order $2$ subgroup of $D_4$ is contained in some subgroup, say $K$, of order $4$ of $D_4$; then $F_s^K$ is a quadratic subfield of $F$; whence $F$ is of \textbf{CM}-type. 
If $G$ is $A_4$ (resp., $S_4$) then $H = \gal(F_s/F)$ corresponds to a subgroup of order 3 (resp., 6); such a subgroup is maximal; hence by Galois theory $F$ is primitive. To summarize, we have shown 
\[
\text{$F$ is primitive $\iff$ $\gal(F_s/\qq)$ is $S_4$ or $A_4$.}
\]
The argument above does not rely on the fact that $F$ is totally imaginary and but only employs Galois theory and elementary group theory. 

Thus, to count quartic number fields of \textbf{TR}-type (which is equivalent to counting \textbf{CM}-type), we need to know the asymptotics for totally imaginary quartic fields with Galois group $S_4$ and $A_4$. Towards this, let us recall some results from the literature on such asymptotics; in the results quoted below, number fields are ordered by discriminant. 
\begin{enumerate}
\item Bhargava \cite{bhargava} showed that approximately $17.111\%$ of quartic fields have Galois group $D_4$, and the remaining $82.889\%$ of quartic fields have Galois group $S_4$; in particular, the contribution from the $C_4$, $V_4,$ and $A_4$ quartic fields are negligible in comparison to the $S_4$ and $D_4$ quartic fields. 

\item Bhargava also proved that asymptotically $30\%$ of the $S_4$-quartic fields are totally imaginary; see Thm.\,1 of {\it loc.\,cit.}

\item Cohen--Diaz Y Diaz--Olivier \cite[Prop.\,6.2]{cohen} proved that asymptotically $71.747\%$ of the $D_4$-quartic fields are totally imaginary. 
\end{enumerate}
From these results one can deduce the first main theorem of this article:

\begin{theorem}
\label{thm:quartic}
 When ordered by absolute discriminant, asymptotically, about $66.948\%$ of the totally imaginary quartic fields are of \textbf{TR}-type, and the remaining $33.052\%$ 
 of the totally imaginary quartic fields are of \textbf{CM}-type, i.e., 
 $
 \lim_{X \to \infty} R_4^{\textbf{CM}}(X) \approx 0.33052.$
\end{theorem}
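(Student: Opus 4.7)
The plan is a direct computation assembling the three cited asymptotic inputs. By the Galois-theoretic dichotomy established just above, a totally imaginary quartic field $F$ is of \textbf{TR}-type precisely when $\gal(F_s/\qq)\in\{S_4,A_4\}$ and of \textbf{CM}-type precisely when $\gal(F_s/\qq)\in\{C_4,V_4,D_4\}$. Write $N_4(X)$ for the total number of quartic fields with $|\delta_F|\leq X$. The first step is to use item (1) to discard the $C_4$, $V_4$, and $A_4$ contributions: each of these three families is of strictly smaller order in $X$ than $N_4(X)$, hence negligible in both the numerator and denominator of $R_4^{\textbf{CM}}(X)$. This reduces matters to
\[
N_4^{\textbf{TR}}(X)\ \sim\ N_4^{S_4,\,\textbf{TI}}(X),\qquad N_4^{\textbf{CM}}(X)\ \sim\ N_4^{D_4,\,\textbf{TI}}(X).
\]

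Next I would combine the Galois-group proportions with the signature proportions. Items (1) and (2) yield
\[
N_4^{S_4,\,\textbf{TI}}(X)\ \sim\ (0.82889)(0.30)\,N_4(X)\ \approx\ 0.24867\,N_4(X),
\]
while items (1) and (3) give
\[
N_4^{D_4,\,\textbf{TI}}(X)\ \sim\ (0.17111)(0.71747)\,N_4(X)\ \approx\ 0.12277\,N_4(X).
\]
Summing these, $N_4^{\textbf{TI}}(X)\sim 0.37144\,N_4(X)$, and the two desired ratios drop out:
\[
\frac{N_4^{\textbf{TR}}(X)}{N_4^{\textbf{TI}}(X)}\ \longrightarrow\ \frac{0.24867}{0.37144}\ \approx\ 0.66948,\qquad R_4^{\textbf{CM}}(X)\ \longrightarrow\ \frac{0.12277}{0.37144}\ \approx\ 0.33052,
\]
which is the claim of the theorem.

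There is really no obstacle beyond assembling the ingredients; the one point that deserves explicit comment is the negligibility of the $C_4$, $V_4$, and $A_4$ contributions to the totally imaginary count. This is already implicit in item (1): the abelian $C_4$ and $V_4$ counts grow like a power of $X$ strictly smaller than the linear growth of $N_4(X)$, and the $A_4$ count is similarly $o(N_4^{S_4}(X))$, so the leading-order ratio is unaffected. (If one wished to bound the error $R_4^{\textbf{CM}}(X)-0.33052$ quantitatively, these lower-order terms would need to be retained, but for the stated asymptotic claim they play no role.)
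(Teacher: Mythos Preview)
Your proof is correct and is essentially the same computation as the paper's: both multiply the Galois-group proportions from item (1) by the signature proportions from items (2) and (3) and take the resulting ratio, which the paper packages as an application of Bayes' theorem. Your explicit remark on the negligibility of the $C_4$, $V_4$, and $A_4$ contributions is likewise exactly what the paper invokes from item (1).
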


\begin{proof}
Using the results quoted above, the proof is a simple exercise in Bayes' theorem. Within the sample space of all quartic fields ordered by discriminant (taken to be 
bounded by $X$ and then letting $X$ go to infinity to get the required asymptotics), let $D$ stand for the subset of $D_4$ extensions, $S$ stand for the subset of $S_4$ extensions, 
and $T$ stand for the subset of totally imaginary extensions. The results quoted can be rephrased as:
\begin{enumerate}
\item $P(D) = 0.17111$ and $P(S) = 0.82889$; 
\item $P(T|S) = 0.3$; 
\item $P(T | D) = 0.71747$.
\end{enumerate}
We compute then that 
\[P(S|T) = \frac{P(T|S) \cdot P(S)}{P(T|S) \cdot P(S) + P(T|D) \cdot P(D)} = 
\frac{0.30 \times 0.82889}{0.30 \times 0.82889 + 0.71747 \times 0.17111} = 0.66948,  
\]
i.e.,  of all totally imaginary quartic fields, about 66.948\% have Galois group $S_4$ and hence of {\bf TR}-type and the remaining 33.052\% have Galois 
group $D_4$ and hence of {\bf CM}-type. 
\end{proof}

\section{Sextic totally imaginary extensions}

Before we can state our main theorem for sextic totally imaginary extensions, we need to collect some preliminaries on $6$-transitive groups. 

\subsection{$6$-transitive groups}
Let $F$ be a sextic totally imaginary number field and $F_s$ its Galois closure. Then the Galois group $G = \gal(F_s/\qq)$ is a transitive subgroup of $S_6$ via its action on 
the set of $6$ roots of an irreducible quartic polynomial of a primitive element generating $F$. Up to isomorphism, there are $16$ possible $6$-transitive subgroups of $S_6$. However, 
the six roots being pairwise conjugate, there is an element of $G$ of $(2,2,2)$-cycle decomposition as an element of $S_6$, in particular $G$ is not a subgroup of $A_6$; this 
cuts down the set of possible $6$-transitive groups for $G$ to be one of the following eleven:  

\begin{enumerate}
\item (6T1) $C_6$
\item (6T2) $S_3$
\item (6T3) $D_6$
\item (6T5) $S_3\times C_3$ 
\item (6T6) $A_4\times C_2$ 
\item(6T8)  $S_4$ 
\item (6T9) $S_3\times S_3$ 
\item (6T11) $S_4\times C_2$ 
\item (6T13) $C_3^2\rtimes D_4$
\item (6T14) $S_5\cong PGL_2(\fcy{F}_5)$
\item (6T16) $S_6$ 
\end{enumerate}
The above list can be deduced from the corresponding table in 
Dummit \cite[App.\,A]{dummit}. The label (6Tn) refers to the label of the group as in the database of $6$-transitive groups in Magma or GAP. 

\subsection{A digression on possible Galois groups}
That $A_6$ is not the Galois group of a sextic totally imaginary number field is a special case of the following more general lemma: 

\begin{lem}
\label{lem:A_n_not_Galois}
    Let $F$ be a number field of signature $(r_1,r_2)$ and degree $n=[F:\qq]=r_1+2r_2$. Let $G=\gal(F_s/\qq)$ be the Galois group of a Galois closure $F_s$ of $F$.    
    If $r_2$ is odd, then $G\not\cong A_n$.
\end{lem}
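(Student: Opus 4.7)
The plan is to identify complex conjugation as an element of $G$ and compute its sign under the natural embedding $G \hookrightarrow S_n$ given by the action on the roots of the minimal polynomial of a primitive element of $F$. First I would fix an embedding $F_s \hookrightarrow \cc$ (which also fixes an embedding of $F$); since $r_2$ is odd and hence positive, $F$ and therefore $F_s$ is not totally real, so the restriction of complex conjugation to $F_s$ is a non-trivial element $c \in G$ of order $2$.

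Next, let $\alpha$ be a primitive element for $F/\qq$. The $n$ roots of the minimal polynomial of $\alpha$ correspond bijectively to the embeddings $F \hookrightarrow \cc$, and $G$ acts on this set yielding the embedding $G \hookrightarrow S_n$ used in the statement. The $r_1$ real embeddings contribute $r_1$ roots fixed by $c$, while the $r_2$ pairs of complex conjugate embeddings contribute $r_2$ pairs of roots that are swapped by $c$. Hence, as a permutation in $S_n$, the element $c$ has cycle type consisting of $r_1$ fixed points and $r_2$ disjoint transpositions, so $\sgn(c) = (-1)^{r_2}$.

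If $r_2$ is odd, then $c$ is an odd permutation, so $c \notin A_n$, and consequently the image of $G$ in $S_n$ is not contained in $A_n$. Since any subgroup of $S_n$ of order $n!/2$ must equal $A_n$ (as $A_n$ is the unique index-two subgroup of $S_n$, being the kernel of the sign character), we conclude $G \not\cong A_n$. There is no real obstacle; the only care required is to match the embedding $G \hookrightarrow S_n$ used implicitly in the lemma (the transitive action on the roots of a minimal polynomial for $F$) with the cycle-structure description of complex conjugation, and to observe that the parity of $c$ is then visibly controlled by $r_2$.
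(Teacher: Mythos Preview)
Your proof is correct and slightly more direct than the paper's. The paper argues via the discriminant: it uses that the discriminant root field $\qq(\sqrt{\mathrm{disc}(f)})$ is the fixed field of $G\cap A_n$, and that $\mathrm{sign}(\mathrm{disc}(f))=(-1)^{r_2}$, so when $r_2$ is odd this field is imaginary quadratic, forcing $G\cap A_n\neq G$. You instead exhibit an explicit odd permutation in $G$, namely complex conjugation, whose cycle type $(1^{r_1},2^{r_2})$ you read off from the signature. Both routes establish $G\not\subset A_n$ and then finish by uniqueness of the index-$2$ subgroup of $S_n$; your argument is self-contained and avoids invoking the discriminant--sign formula, while the paper's version ties the result to the classical discriminant criterion for $G\subset A_n$.
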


\begin{proof}
The number field $F$ is defined by an irredcuble polynomial $f(x)\in \qq[x]$.  The polynomial discriminant, $\mathrm{disc}(f)$, is well-defined up to non-zero squares. The  discriminant root field of the extension $\qq(\sqrt{\mathrm{disc}(f)})$ is well-defined. The Galois group $G$ is a subgroup of $S_n$, well-defined up to conjugation. The discriminant root field is 
the fixed field of $\gal(F_s/\qq)\cap A_n$. Note that $\mathrm{sign}(\delta_F)=(-1)^{r_2}$ and $\mathrm{sign}(\delta_F)=\mathrm{sign}(\mathrm{disc}(f))$. If $r_2$ is odd, then $\mathrm{sign}(\mathrm{disc}(f))=-1$. Thus, $\qq\left(\sqrt{\mathrm{disc}(f)}\right)$ is an imaginary quadratic extension of $\qq$. Suppose $\gal(F_s/\qq)=A_n$, then by Galois theory 
$\gal\left(F_s/\qq\left(\sqrt{\mathrm{disc}(f)}\right)\right) \cong \gal(F_s/\qq)\cap A_n = \gal(F_s/\qq),$ which implies $\qq\left(\sqrt{\mathrm{disc}(f)}\right)=\qq$, giving us a contradiction. 
\end{proof}
In particular, if $F$ is a totally imaginary number field of degree $n$, and $n/2$ is odd, then $\gal(F_s/\qq)\not\cong A_n$. This applies to our case of $n=6$. This ends the digression. 

\subsection{The main result on sextic totally imaginary number fields}

\begin{theorem}
\label{thm:sextic}
Let $F$ be a sextic totally imaginary number field, $F_s$ its Galois closure, and $G = \gal(F_s/\qq)$ its Galois group. 
\begin{enumerate}
\smallskip
\item If $G$ is $S_4$, $S_5$, or $S_6$ then $F$ is of {\bf TR}-type. 
\smallskip
\item If $G$ is $C_6$, $S_3$, $D_6$, $S_3 \times C_3$, $A_4 \times C_2$,  $S_3 \times S_3$, or  $C_3^2 \rtimes D_4$ then $F$ is of {\bf CM}-type. 
\smallskip
\item If $G = S_4 \times C_2$ then $F$ can be of either type; one can distinguish between the two types by identifying an order $2$ element of $G$ that represents complex conjugation. 
\end{enumerate} 
\end{theorem}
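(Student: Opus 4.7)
The plan is to translate the CM/TR dichotomy into group theory, extract a clean trichotomy, and then work case-by-case through the eleven possibilities. Fix an embedding $F_s \hookrightarrow \C$, write $H = \gal(F_s/F)$ (of index $6$ in $G$), and let $c \in G$ denote complex conjugation. The hypothesis that $F$ is totally imaginary says $c$ acts on $G/H$ as a $(2,2,2)$-cycle, equivalently $g^{-1}cg \notin H$ for every $g \in G$. Under the Galois correspondence $M \mapsto F_s^M$, a subfield with $H \subseteq M \subseteq G$ is totally real iff $c$ lies in the normal core of $M$ in $G$, and totally imaginary iff no conjugate of $c$ lies in $M$; in particular the maximal totally real subfield $F_0 \subseteq F$ corresponds to $M_0 = HN$, where $N \lhd G$ is the normal closure of $c$.

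Two simple observations reduce the problem to a dichotomy on $[G:M_0]$. First, $F$ has at most one quadratic subfield (two distinct ones would generate a biquadratic of degree $4$, contradicting $4 \nmid 6$). Second, $[G:M_0] \neq 2$: if it were, then $[M_0:H] = 3$ and the $\langle c\rangle$-action on $M_0/H$ would need a fixed point (a group of order $2$ cannot act freely on a set of odd size), contradicting $g^{-1}cg \notin H$ for all $g \in M_0$. Combined with $[G:M_0] \mid 6$ and $[G:M_0] \ne 6$ (as $c \notin H$), this forces $[G:M_0] \in \{1, 3\}$. If $[G:M_0] = 3$, then $F_0$ is totally real cubic with $[F:F_0] = 2$, so $F$ is itself CM and hence of \textbf{CM}-type. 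If $[G:M_0] = 1$ (so $F_0 = \qq$), then any proper quadratic subfield of $F$ is necessarily imaginary; thus $F$ is of \textbf{CM}-type iff $G$ admits an index-$2$ subgroup containing $H$ (in which case $F$ contains an imaginary quadratic subfield but is not itself CM), and \textbf{TR}-type otherwise.

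The proof then proceeds case-by-case: for each $G$ one identifies the stabilizer $H$ realizing the given $6$T-action, enumerates the $G$-conjugacy classes of order-$2$ elements acting as $(2,2,2)$-cycles on $G/H$, computes $N$ and $M_0 = HN$ for each candidate $c$, and when $[G:M_0] = 1$ searches for an index-$2$ subgroup above $H$. For $G \in \{S_4, S_5, S_6\}$ one verifies $M_0 = G$ and that the only index-$2$ subgroup (the alternating group) does not contain $H$ because $H$ contains odd permutations in each case, putting us in the \textbf{TR}-branch and proving (1). For $G \in \{C_6, S_3, S_3 \times C_3, A_4 \times C_2, S_3 \times S_3, C_3^2 \rtimes D_4\}$ a direct computation yields either $[G:M_0] = 3$ or an index-$2$ subgroup above $H$, giving \textbf{CM}-type and (2); in each instance one simultaneously records whether $F$ itself is CM or only contains a proper CM subfield. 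The case $G = D_6$ is piquant because both subcases occur depending on the conjugacy class of $c$: taking $c$ to be the central involution $r^3$ gives $M_0 = \langle H, r^3\rangle$ of index $3$ (so $F$ itself is CM), while taking $c$ to be an edge-type reflection gives $M_0 = D_6$, and the unique index-$2$ subgroup of $D_6$ containing $H = \langle s\rangle$---namely $\langle r^2, s\rangle$---fails to contain any edge reflection, yielding a proper imaginary quadratic CM subfield and a non-CM $F$.

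The most delicate case is $G = S_4 \times C_2$ in (3). Here $G$ has several conjugacy classes of order-$2$ elements acting as $(2,2,2)$-cycles on $G/H$, and tracing each through the above recipe shows that they genuinely split the outcomes: some classes produce $M_0 = G$ with no index-$2$ subgroup of $G$ containing $H$ (\textbf{TR}-type), while others produce $[G:M_0] = 3$ or an index-$2$ subgroup above $H$ (\textbf{CM}-type). Thus specifying the conjugacy class of complex conjugation---equivalently, identifying an order-$2$ element of $G$ that represents it---is essential to fix the type, which is precisely the additional datum appearing in the theorem. The main obstacle throughout is the bookkeeping over the subgroup lattices of the larger groups, especially $C_3^2 \rtimes D_4$ (order $72$) and $S_4 \times C_2$ (order $48$); computer algebra (GAP/Magma) and cross-checks against number-field examples in the LMFDB make the enumeration tractable.
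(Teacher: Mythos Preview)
Your strategy is correct and parallels the paper's proof, but you introduce a cleaner organizing device. The paper's preliminary reduction is the criterion ``$F$ is of \textbf{TR}-type iff $F$ contains neither a quadratic subfield nor a totally real cubic subfield,'' and then for each of the eleven groups it inspects the subgroup lattice above $H$, identifies the index-$2$ and index-$3$ overgroups explicitly, and checks case-by-case (via the action of $\mf{c}$ on cosets) whether any cubic subfield is totally real. Your reformulation via $M_0 = HN$ (with $N$ the normal closure of $c$) and the dichotomy $[G:M_0] \in \{1,3\}$ is equivalent but more uniform: it packages the ``totally real cubic'' test into a single normal-closure computation, and the impossibility of $[G:M_0]=2$ by the parity argument is a nice touch absent from the paper. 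What your framework buys is that the CM-versus-non-CM distinction within the \textbf{CM}-type cases falls out automatically from whether $[G:M_0]=3$ or $[G:M_0]=1$.

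That said, both approaches ultimately require the same explicit bookkeeping. For $G=S_4$ you assert ``one verifies $M_0=G$,'' but this verification is exactly the paper's computation showing that every double transposition in $V_4^n$ fixes some coset of $H$ (hence is not a valid $c$), forcing $c$ to be a transposition and $N=S_4$. For $G=S_4\times C_2$ your remark that some classes of $c$ yield an ``index-$2$ subgroup above $H$'' is harmless but vacuous: the paper shows there is \emph{no} index-$2$ subgroup above $H$ in this case, so the \textbf{CM}-type outcome arises only via $[G:M_0]=3$ (i.e.\ $c\in V_4^n\times C_2$). And for $C_3^2\rtimes D_4$ the paper carries out the nontrivial argument that no index-$3$ overgroup of $H$ exists; your appeal to computer algebra would cover this, but it is the one place where the paper's by-hand argument is genuinely more informative than a lattice search.
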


\subsection{Proof of Theorem~\ref{thm:sextic}}

\subsubsection{Preliminary remarks}
We still have the basic observation 
\[F\text{ is primitive }\implies F\text{ is of \textbf{TR}-type}.\]
Unlike the degree 4 case, examples of $F$ of \textbf{TR}-type but not primitive do exist\footnote{LMFDB number field 6.0.29095.1}.
If $F$ is not primitive, then it can only contain quadratic or cubic subextensions. Quadratic subextensions of $F$ can in principle be either real or imaginary; but if a quadratic subextension is real, then $F$ is a cubic totally imaginary extension of a real quadratic field, which by ramification theory is not possible. Thus, the maximal totally real subfield $F_0$ can either be $\qq$ or a totally real cubic subextension. If $F$ is not primitive and does not contain a cubic subextension, then the maximal totally real subfield $F_0$ of $F$ is $\qq$, and $F$ is of \textbf{CM}-type. If $F$ is not primitive and does contain a cubic subextension $F'$, then $F'$ can be totally real or of mixed signature; if $F'$ is totally real, $F$ is of \textbf{CM}-type. Otherwise, 
    i.e., if $F$ not primitive and every cubic subfield has mixed signature, then 
    $F$ is of \textbf{TR}-type if $F$ does not contain a quadratic subextension. 
    In conclusion, $F$ is of \textbf{TR}-type if and only if $F$ contains neither a quadratic extension nor a totally real cubic extension.
    

\subsubsection{$G = C_6$}
If $\gal(F_s/\qq)\cong C_6$, then $F_s=F$ and the subfields of $F$ are Galois. Both quadratic and cubic subfields occur as they correspond to the index 2 and index 3 subgroups of $C_6$ respectively. By our analysis above, quadratic subfields can only be imaginary. Moreover, a Galois cubic field has to be totally real. Therefore, $F$ is a CM-field.

\subsubsection{$G = S_3$}
If $\gal(F_s/\qq)\cong S_3$, then $F_s=F$. The imaginary quadratic subfield of $F$ corresponds to the subgroup $A_3$ of $S_3$ and any cubic subfield of $F$ corresponds to a copy of $S_2$ in $S_3$. Suppose $K$ is a totally real cubic subfield of $F$, then $K/\qq$ is Galois, because, for $\sigma\in\gal(F/\qq)$, we know that $\sigma(K)$ is also totally real, and 
if $\sigma(K) \neq K$ then $F = K\sigma(K)$ is totally real which is not the case, hence $\sigma(K) = K$, whence $K/\Q$ is Galois; this implies that $\gal(F_s/K)$ is an order $2$ normal subgroup
of $S_3$ which is not possible. Thus, only mixed signature cubic subfields can appear; hence $F$ is of {\bf CM}-type but not a CM field.

\subsubsection{$G = S_5$}
If $\gal(F_s/\qq)\cong S_5$, we will show that $F$ is primitive, hence of \textbf{TR}-type. 
Specifically, $F$ corresponds to a subgroup in $S_5$ of order 20. Suppose $F$ is not primitive, then there exists a subgroup of $\gal(F_s/\qq)$ of order 40 or 60 containing $\gal(F_s/F)$ as a maximal subgroup. The maximal subgroups of $S_5$ are of order 12, 20, 24, or 60\footnote{https://groupprops.subwiki.org/wiki/Subgroup\_structure\_of\_symmetric\_group:S5}.
In particular, there is no subgroup of order 40. The only subgroup of $S_5$ of order 60 is $A_5$. But then $\gal(F_s/F)$ would be a subgroup of order 20 inside $A_5 = \gal(F_s/F_s^{A_5})$; but $A_5$ does not have a subgroup of order 20. Thus, $F$ is primitive.

\subsubsection{$G = S_6$}
 If $\gal(F_s/\qq)$ is $S_6$, then as in the $S_5$ case, we will see that $F$ is primitive, hence of \textbf{TR}-type.
    If $\gal(F_s/\qq)\cong S_6$, then $\gal(F_s/F)$ is a subgroup of $S_6$ of order 120. Suppose $F$ is not primitive, then there exists a subgroup of $\gal(F_s/\qq)$ of order 240 or 360 containing $\gal(F_s/F)$ as a maximal subgroup. The maximal subgroups of $S_6$ are of order 48, 72, 120, 
    or 360\footnote{https://groupprops.subwiki.org/wiki/Subgroup\_structure\_of\_symmetric\_group:S6}. In particular, there is no subgroup of order 240. The only subgroup of $S_6$ of order 360 is $A_6$. But then $\gal(F_s/F)$ would be a subgroup of order 120 inside $A_6 = \gal(F_s/F_s^{A_6})$; but $A_6$ does not have a subgroup of order 120. Thus, $F$ is primitive.

\subsubsection{$G = S_4$}
If $\gal(F_s/\qq)\cong S_4$, then $H:=\gal(F_s/F)$ corresponds to an order 4 non-normal subgroup of $S_4$. From the lattice of subgroups of $S_4$ (App.\,\ref{sec:app}, Fig.\,\ref{fig:lattice-s4}), we see 
that $H$ is $C_4^j$ or $V_4^j$ for some $j=1,2,3$. ($H$ cannot be $V_4^n$ since it is normal in $S_4$.) The only proper subgroup containing $H$ is the corresponding $D_4^j$. Hence 
$F$ has a unique necessarily cubic subfield--denote this as $F_c$. We will show that $F_c$ is not totally real, from which we conclude that $F$ is of 
{\bf TR}-type. To proceed, we first need the following lemma whose proof is an easy exercise. 

\begin{lem}
\label{embed}
 Suppose $E/F/\qq$ is a tower of finite extensions and $E/\qq$ is Galois with Galois group $G$. Let $H=\gal(E/F)$. Then $\Hm(F,E)$ is in bijection with the set of cosets $G/H$. Furthermore, the bijection is $G$-equivariant.
\end{lem}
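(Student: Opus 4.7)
The plan is to construct the bijection directly via restriction. I would define
\[
\Phi: G \longrightarrow \Hm(F,E), \qquad \sigma \longmapsto \sigma|_F,
\]
which is well-defined since $\sigma(F) \subseteq E$ for every $\sigma \in G$. The desired identification $G/H \cong \Hm(F,E)$ will then follow by showing $\Phi$ is surjective and that its fibers are precisely the left cosets of $H$.

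For surjectivity, given an embedding $\iota : F \hookrightarrow E$, I would fix an algebraic closure $\ol{\qq}$ containing $E$ and extend $\iota$ to an embedding $\tilde\iota : E \hookrightarrow \ol{\qq}$ using that $E/F$ is algebraic. Because $E/\qq$ is Galois, hence normal, every embedding of $E$ into $\ol{\qq}$ must land inside $E$, so $\tilde\iota$ defines an element of $G$ restricting to $\iota$. For the fiber description, observe that $\sigma|_F = \tau|_F$ iff $\tau^{-1}\sigma$ fixes $F$ pointwise iff $\tau^{-1}\sigma \in H$, i.e.\ $\sigma H = \tau H$. Hence $\Phi$ factors through a bijection $\bar\Phi : G/H \xrightarrow{\sim} \Hm(F,E)$.

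For $G$-equivariance, I would take the natural left $G$-actions: on $G/H$ by left multiplication $g\cdot\sigma H = g\sigma H$, and on $\Hm(F,E)$ by post-composition $g\cdot\iota = g\circ\iota$. Under $\bar\Phi$ the image of $g\sigma H$ is $(g\sigma)|_F = g|_E \circ \sigma|_F = g\cdot(\sigma|_F)$, confirming equivariance.

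There is no real obstacle here; the single substantive ingredient is the appeal to normality of $E/\qq$ used to extend an embedding $F\hookrightarrow E$ to an automorphism of $E$, which is exactly what surjectivity requires. Everything else is formal bookkeeping with cosets and restrictions of field homomorphisms.
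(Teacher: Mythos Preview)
Your proof is correct and is exactly the standard argument one would give for this classical fact. The paper itself does not supply a proof at all: it introduces the lemma with the phrase ``whose proof is an easy exercise'' and moves on. Your write-up fills in precisely the expected details (restriction map, surjectivity via normality of $E/\qq$, fibers as left cosets, equivariance under post-composition), so there is nothing to compare or correct.
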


Let $\Sigma_{F_c}=\{\sigma_1,\sigma_2,\sigma_3\}$ be the set of embeddings of $F_c$ into $\cc$. Suppose, by the way of seeking a contradiction,  that $F_c$ is totally real. If $\mf{c}\in\aut(\cc)$ denotes complex conjugation, we should have $\mf{c}\circ \sigma_k =\sigma_k$ for all $k=1,2,3$. Note that $\mf{c}|_{F_s}$ can be regarded as an element of order 2 in $S_4$. By the lemma above, we have a $S_4$-equivariant bijection $\Sigma_{F_c}\cong \gal(F_s/\qq)/\gal(F_s/F_c)=S_4/D_4^j$. Therefore, to say that $\mf{c}\circ \sigma_k =\sigma_k$ for all $k=1,2,3$ is equivalent to 
saying $\mf{c}(xD_4^j)=xD_4^j$ for all $x\in S_4$. This means that 
$\mf{c}\in\bigcap_{x\in S_4}xD_4^jx^{-1}=\bigcap_{j=1,2,3}D_4^j=V_4^n=\left\{1,(12)(34),(13)(24),(14)(23)\right\}.$ Hence, 
$$
\mf{c}\in \left\{(12)(34),(13)(24),(14)(23)\right\}.
$$
Given that $F$ is totally imaginary, $\mf{c}$ exchanges pairs of conjugate-embeddings, or equivalently, exchanges pairs of cosets in $S_4/H$. 
We look at the action of the possible choices of $\mf{c}$ on different possibilities of $H$.

\smallskip
First suppose that $H=C_4^j$. Assume, without loss of generality that 
\[
H=\{1,(1234),(13)(24),(1432)\}.
\]
\begin{itemize}
\item If $\mf{c}=(12)(34)$, we have $\mf{c}((14)H)=(14)H$ since $(14)(13)(24)(14)=(12)(34).$
\item If $\mf{c}=(13)(24)$, we have $\mf{c}H=H$.
\item If $\mf{c}=(14)(23)$, we have $\mf{c}((12)H)=(12)H$ since $(12)(13)(24)(12)=(14)(23)$.
\end{itemize}

\smallskip
Now suppose that $H=V_4^j$. Assume, without loss of generality that 
\[
H=\{1,(12),(34),(12)(34)\}.
\]
\begin{itemize}
\item If $\mf{c}=(12)(34)$, then $\mf{c}H=H$. 
\item If $\mf{c}=(13)(24)$, we have $\mf{c}((23)H)=(23)H$ since $(23)(12)(34)(23)=(13)(24)$.
\item  If $\mf{c}=(14)(23)$, we have $\mf{c}((24)H)=(24)H$ since $(24)(12)(34)(24)=(14)(23)$.
\end{itemize}
In all cases $\mf{c}$ fixes a coset and so the corresponding embedding of $F$ is real--a contradiction since $F$ is totally imaginary. 
Therefore the unique cubic subfield $F_c$ can only be of mixed signature.

\subsubsection{$G = D_6$}
\label{D6}
If $\gal(F_s/\qq)\cong D_6$, then $H = \gal(F_s/F)$ is an order 2 subgroup of $D_6$. Since $F/\qq$ is not Galois, we rule out the case of $H =\langle r^3\rangle$. Any subfield $F'\subset F$ corresponds to either an order 4 or order 6 subgroup of $D_6$ containing $H$. By the subgroup lattice of $D_6$ (App.\,\ref{sec:app}, Fig. \,\ref{fig:lattice-d6}), every order 2 subgroup (that is not $\langle r^3\rangle$) of $D_6$ is contained in a unique subgroup of order 6 and a unique subgroup of order 4. Thus, $F$ has a unique imaginary quadratic and a unique cubic subfield. Since the order 4 subgroups are not normal, the cubic subfield can be totally real or mixed signature.

We want to determine when the cubic subfield is totally real (resp., mixed signature). Let $H_c$ denote the order 4 subgroup corresponding to the cubic subfield $F_c=F^{H_c}$. Let $\mf{c}$ denote complex conjugation whose restriction to $F_s$ is an element of order 2 in $D_6$ that we also denote $\mf{c}$. We know that $\mf{c}$ acts on the set of embeddings $\Sigma_{F_c}$ of $F_c$ into $\cc$. Lemma \ref{embed} implies that $\mf{c}$ acts on the set $G/H_c$. We can determine whether $F_c$ is totally real by analyzing the action of $\mf{c}$ on $G/H_c$. In particular, $F_c$ is totally real if and only if  $\mf{c}$ acts trivially on $G/H_c$, which is equivalent to  $ \mf{c}(gH_c)=gH_c$ for all $g\in G$. This gives us the condition
$ \mf{c}\in \bigcap_{g\in G} gH_cg^{-1}.$ 
By the same argument, if $H_q$ denote the order 6 subgroup corresponding to the imaginary quadratic subfield $F_q=F^{H_q}$, then $\mf{c}$ should exchange pairs of cosets in $G/H_q$. Similarly, since $F$ is totally imaginary, $\mf{c}$ should exchange pairs of cosets in $G/H$. From these two cases, we know that $\mf{c}\not\in H_q$ and $\mf{c}\not\in H$. To summarize, $\mf{c}$
should satisfy:
$$
\mf{c}\in \bigcap_{g\in G} gH_cg^{-1}, \quad \mf{c}\not\in H_q, \quad \mf{c}\not\in H. \quad \quad \quad \quad (*)
$$

With these properties, we can now enumerate the possible choices for $\mf{c}$ given different choices of $H$. From the symmetry in the subgroup lattice of $D_6$ (App.\,\ref{sec:app}, Fig.\,\ref{fig:lattice-d6}), it suffices to consider the cases when $H\in\{\langle s\rangle,\,\langle sr^2\rangle\,\langle sr^4\rangle\}$.
When $H=\langle s\rangle$, we know $H_c=\langle s,r^3\rangle$ and $H_q=\langle s,r^2\rangle$. Since $\mf{c}$ is of order 2 and $\mf{c}\not\in H,H_q$, one can only have $\mf{c}\in\{r^3,sr^5,sr^3,sr\}$. It is easy to verify that only $\mf{c}=r^3$ satisfies $(*)$. The other two cases of $H$ are identical to the case $H=\langle s\rangle$. We can therefore conclude that $F_c$ is totally real if and only if $\mf{c}$ is central (since $r^3$ is the only order 2 central element in $D_6$). The following two examples show that both cases--$F_c$ being totally real or mixed signature can happen. 

\begin{enumerate}
\item The number field with label 6.0.309123.1 on LMFDB is a totally imaginary sextic field with Galois group $D_6$; it is a CM-field; its unique cubic subfield 
$F_c$ with label 3.3.321.1 is totally real ($\mf{c}$ is necessarily central); the unique quadratic subfield $F_q$ is $\qq(\sqrt{-3}).$ 
\item The number field with label 6.0.14283.1 on LMFDB is a totally imaginary sextic field with Galois group $D_6$ is not a CM-field; its unique cubic subfield $F_c$ with label 
3.1.23.1 is of mixed signature; the unique quadratic subfield is $\qq(\sqrt{-3}).$ Hence $F$ is of {\bf CM}-type but not a CM-field. 
\end{enumerate}

In the spirit of this article, one may formulate the na\"ive problem: 
\begin{prob}
\label{prob:D6}
When ordered by discriminant, determine what percentage of sextic totally imaginary fields with Galois group $D_6$ are not CM-fields. 
\end{prob}
Following Altug--Shankar--Varma--Wilson \cite{ali-s-v-w} one can ask for variations of the above problem exploiting the structure of $D_6$. 
Going modulo the center of $D_6$ we get a surjection $D_6 \to S_3$. Consider the unique two-dimensional irreducible representation of $S_3$ and inflate it back to $D_6$, which gives us  
the unique two-dimensional irreducible representation of $D_6$ with trivial central character. We can order the totally imaginary sextic fields with Galois group $D_6 = \gal(F_s/\qq)$ using the Artin conductor of this particular two-dimensional representation and then ask the question of determining what percentage of such fields are CM-fields.

\subsubsection{$G = S_3 \times C_3$}
 If $\gal(F_s/\qq)\cong S_3\times C_3$, then $H = \gal(F_s/F)$ corresponds to an order 3 subgroup of $S_3\times C_3$. Consider the image of $H$ under the canonical projections
$$
\xymatrix{
 & S_3\times C_3 \ar[rd]^{\pi_2} \ar[ld]_{\pi_1} & \\
 S_3 & & C_3 
}
$$ 
Clearly, $|\pi_2(H)|$ divides $3.$ Suppose $|\pi_2(H)|=1$, then we know that $H\subset\ker(\pi_2)=S_3\times\{1\}$. The unique subgroup of order $3$ in $S_3$ is $A_3$, thus, $H\cong A_3\times \{1\}$, which is normal in $S_3\times C_3$. This would force $F$ to be Galois over $\qq$, which is a contradiction.
Hence, $|\pi_2(H)|=3$. Now looking at the order of $\pi_1(H)$ we only have two cases:

(i) $|\pi_1(H)|=1$. Then $H = \ker(\pi_1)=\{1\}\times C_3$ which is normal in $S_3\times C_3$, whence $F$ is Galois over $\qq$, which is a contradiction.

(ii) $|\pi_1(H)|=3$. This implies $H\cong \pi_1(H) \cong A_3$. Furthermore, there is a surjective (hence bijective) map $f:\pi_1(H)=A_3\to\pi_2(H)=C_3$ such that
$H=\{(x,f(x))\mid x\in A_3\}.$ In fact, $f$ is an isomorphism. Also, there is a unique intermediate subgroup: $H\subset A_3\times C_3\subset_2 S_3\times C_3.$ 
Therefore, $F_s^{A_3\times C_3}$ is a quadratic subfield of $F$ that is necessarily imaginary. Hence $F$ is of {\bf CM}-type but not a CM-field.

\subsubsection{$G = A_4 \times C_2$}
If $\gal(F_s/\qq)\cong A_4\times C_2$, then $\gal(F_s/F)$ corresponds to an order 4 subgroup of $A_4\times C_2$. To classify such order 4 subgroups, consider the projections 
$$
\xymatrix{
 & A_4\times C_2 \ar[rd]^{\pi_2} \ar[ld]_{\pi_1} & \\
 A_4 & & C_2 
}
$$ 
The order of $\pi_1(H)$ divides $4$ and the order of $\pi_2(H)$ divides $2$. If $\pi_2(H)$ is trivial then $H\subset\ker(\pi_2)=A_4\times\{1\}$; by the subgroup lattice, the unique subgroup of order $4$ in $A_4$ is $V_4$, thus, $H\cong V_4\times \{1\}$, which is normal in $A_4\times C_2$; this would force $F$ to be Galois over $\qq$, which is a contradiction. Hence $\pi_2(H) = C_2$. 
By order consideration, we can only have two cases:

(i) $|\pi_1(H)|=2$. By the exact sequence $1\to H\cap\ker(\pi_1)\to H\xrightarrow{\pi_1} \pi_1(H)\to 1$, 
we know that $|H\cap\ker(\pi_1)|=|H|/|\pi_1(H)|=2$, which implies $\ker(\pi_1)=\{1\}\times C_2\leq H$. Since $\{1\}\times C_2$ is normal in $A_4\times C_2$, we would have $F_s^{\{1\}\times C_2}/\qq$ is Galois, which contradicts the fact that $F_s$ is the smallest Galois extension over $\qq$ containing $F$.

(ii) $|\pi_1(H)|=4$. This implies $H\cong \pi_1(H) \cong V_4$. There exists a surjective map $f:\pi_1(H)=V_4\to\pi_2(H)=C_2$ such that
$H=\{(x,f(x))\mid x\in V_4\}.$ 
Notice that $H\subset V_4\times C_2\lhd_3 A_4\times C_2.$ 
Therefore, $F_s^{V_4\times C_2}$ is a cubic subfield of $F$ which is Galois. This forces the cubic subfield to be totally real. Hence, $F$ is a CM field.

\subsubsection{$G=S_3\times S_3$}
If $\gal(F_s/\qq)\cong S_3\times S_3$, then $H = \gal(F_s/F)$ is a subgroup of $S_3\times S_3$ or order $6$. To classify all such subgroups. Consider the projections
$$
\xymatrix{
 & S_3 \times S_3 \ar[rd]^{\pi_2} \ar[ld]_{\pi_1} & \\
 S_3 & & S_3
}
$$ 
We must have the orders of $\pi_1(H)$ and $\pi_2(H)$ to be factors of $6$. 

Suppose $|\pi_2(H)|=1$. Thus, $H\subset\ker(\pi_2)$, which forces $H\cong S_3\times\{1\}$ by order consideration; such a subgroup is normal in $S_3\times S_3$, forcing 
$F$ to be Galois over $\qq$, which is a contradiction.

Suppose $|\pi_2(H)|=2$, then by  the  exact sequence $1\to H\cap\ker(\pi_2)\to H\xrightarrow{\pi_2} \pi_2(H)\to 1$, 
we know that $|H\cap\ker(\pi_2)|=3$, which implies $C_3\times\{1\}\leq H$. Since $C_3\times\{1\}$ is normal in $S_3\times S_3$, we then have $F_s^{C_3\times \{1\}}/\qq$ is Galois, which  contradicts $F_s$ being the smallest Galois extension over $\qq$ containing $F$.

Suppose $|\pi_2(H)|=3$, then by the same exact sequence $|H\cap\ker(\pi_2)|=2$, which implies $C_2\times\{1\}\subset H$. This implies that $H=C_2\times C_3$ or 
$H=\{(x,f(x))\mid x\in S_3\}$ for $f$ a surjective map $f:\pi_1(H)=S_3\to\pi_2(H)=C_3$; the latter is ruled out as there is no nontrivial morphism from $S_3$ to $C_3$, and 
the former is ruled out because it contains $C_3\times\{1\}$ arguing then as in the previous paragraph. 

Suppose $|\pi_2(H)|=6$. We will rule out $\{1\}\times S_3$ as above. The only remaining possible $H$ is of the form 
\[H=\{(x,f(x))\mid x\in S_3\},\]
for $f$ an automorphism of $S_3$. Consider the following homomorphism
\[\phi:S_3\times S_3\to\{\pm 1\},\quad (x,y)\mapsto \sgn(x)\sgn(y)\]
and let $K=\ker(\phi)$. Since $\phi$ is surjective, we note that $K$ is a subgroup of order $18$ containing $H$ for all $f$. Thus, $F_s^K$ is a quadratic subfield of $F$. By order consideration, $H$ is maximal in $K$. Thus, any other subgroup $L$ of $S_3\times S_3$ containing $H$ has to contain an element of the form $(x,y)$, where $\sgn(x)\neq \sgn(y)$. Since the $H$'s are isomorphic for different choices of $f$, we will work with $f=1_{S_3}$ for simplicity, in which case $H$ is the usual diagonal $S_3$ in the product $S_3 \times S_3.$ 
We claim that $|L|\neq 12$. Suppose otherwise, then for all $l\in L$, we should have $l^2\in H$. By our observation, $L$ needs to contain a $(1-cycle,2-cycle)$ or $(2-cycle,3-cycle)$ (or exchanging the first and second copy). It is clear that $(2-cycle,3-cycle)^2\not\in H$. Without loss of generality we can take the $(1-cycle,2-cycle)$ to be $(1,(12))$. Multiplying by $((23),(23))$ on the right, we get $((23),(231))$, whose square does not lie in the diagonal. Thus, we arrive at a contradiction and $|L|\neq 12$. We conclude that $K$ is the only subgroup containing $H$. 
Hence, $F$ contains a unique quadratic subfield, which is necessarily imaginary, whence, $F$ is of {\bf CM}-type but not a CM-field.

\subsubsection{$G = S_4 \times C_2$}
If $\gal(F_s/\qq)\cong S_4\times C_2$, then $H:=\gal(F_s/F)$ corresponds to an order 8 subgroup. To classify all such subgroups. Consider the projections
$$
\xymatrix{
 & S_4 \times C_2 \ar[rd]^{\pi_2} \ar[ld]_{\pi_1} & \\
 S_4 & & C_2
}
$$ 
The order of $\pi_1(H)$ must divide $8$.

Suppose $|\pi_1(H)|=1$, then $H\subset\ker(\pi_1)=\{1\}\times C_2$, which is impossible.

Suppose $|\pi_1(H)|=2$, then by  the exact sequence $1\to H\cap\ker(\pi_1)\to H\xrightarrow{\pi_1} \pi_1(H)\to 1$, 
we get $|H\cap\ker(\pi_1)|=|H|/|\pi_1(H)|=4$, which is impossible.

Suppose $|\pi_1(H)|=4$, then $|H\cap\ker(\pi_1)|=2$, or equivalently $\ker(\pi_1)=\{1\}\times C_2\leq H$. Since $\{1\}\times C_2$ is normal in $S_4\times C_2$, we would have $F_s^{\{1\}\times C_2}/\qq$ is Galois contradicting $F_s$ being the smallest Galois extension over $\qq$ containing $F$.

Suppose $|\pi_1(H)|=8$. Consider two sub-cases depending on the order of $\pi_2(H)$: 

Suppose that $|\pi_1(H)|=8$ and $|\pi_2(H)|=1$, then $H\cong D_4\times\{1\}$ since $D_4$ is the unique order 8 subgroup of $S_4$. By the group lattice of $S_4$ 
(App.\,\ref{sec:app}, Fig.\,\ref{fig:lattice-s4}) we see that $D_4$ contains the unique Klein-4 subgroup that is normal in $S_4$ denoted by $V_4^n$. But then $F_s^{V_4^n\times\{1\}}/\qq$ is Galois, contradicting  
$F_s$ being the smallest Galois extension over $\qq$ containing $F$.

Suppose now that $|\pi_1(H)|=8$ and $|\pi_2(H)|=2.$ Then  $\pi_1|_H$ is injective, hence $H\cong \pi_1(H)\cong D_4$. 
From the group lattice of $S_4$ (App.\,\ref{sec:app}, Fig.\,\ref{fig:lattice-s4}) we see that $S_4$ contains three copies of $D_4$ denoted $D_4^j$ for $j=1,2,3$. 
Each $D_4^j$ contains $V_4^n$, a copy of $C_4$ denoted by $C_4^j$, and a non-normal copy of $V_4$ denoted by $V_4^j$.
Assume without loss of generality that $\pi_1(H)\cong D_4^j$ for a particular $j$. Since $|\pi_2(H)|=2$, there exists a surjective map $f:\pi_1(H)\cong D_4^j\to\pi_2(H)=C_2$ such that 
\[H=\left\{(x,f(x))\mid x\in D_4^j\right\}.\]
Observe that since $\pi_1$ is injective, $f$ needs to be a function. Since $(1,1)\in H$, we necessarily have $f(1)=1$. Moreover, $(x,f(x))(y,f(y))=(xy,f(x)f(y))$ combined with the fact that $f$ is a function shows that $f(xy)=f(x)f(y)$. Therefore, $f$ is a group homomorphism. Thus, $H$ is completely determined by the datum $(D_4^j,f)$. Since $\ker(f)$ is normal in $D_4^j$ and of order 4, it could be $V_4^n$, $V_4^j$, or $C_4^j$. If $\ker(f)=V_4^n$, then $V_4^n\times\{1\}\leq H$ is normal in $S_4\times C_2$ and we have a contradiction as before. Thus, we can only have $\ker(f)=V_4^j$ or $\ker(f)=C_4^j$. In each case, the kernel determines $f$ completely. Observe that 
\[H=(D_4^j,f)\subset_2 D_4^j\times C_2\subset_3 S_4\times C_2,\]
where the subscript indicates the index. Therefore, $D_4^j\times C_2$ corresponds to a cubic subfield of $F$. Since $D_4^j\times C_2$ is not normal, the cubic subfield can be totally real or mixed signature. In fact, this is the unique cubic subfield in $F$, or equivalently, $D_4^j\times C_2$ is the unique index 3 subgroup of $S_4\times C_2$ containing $H$. To see this, consider a subgroup $K$ such that
\[H\subset_2 K \subset_3 S_4\times C_2.\]
Since $|K|=16$, we necessarily have $\pi_1(K)\cong D_4^j$ and $\pi_2(K)=C_2$. Let $(x,y)\in K\setminus H$, then $(x,-y)\in H$, which implies $(x^{-1},-y)\in H$. Thus, $(x,y)(x^{-1},-y)=(1,-1)\in K$ and we conclude that $\{1\}\times C_2\leq K$. Consequently, for any $(x,y)\in K$, we have $(1,y)\in K$ and $(x,y)(1,y)=(x,1)\in K$. Thus, $D_4^j\times\{1\}\leq K$, which forces $K\cong D_4^j\times C_2$.

Suppose there exists $H'$ such that $H\subset_3 H'$. Then we must have $\pi_1(H')=S_4$ and $\pi_2(H')=C_2$. In other words, $H'$ is also of the form $(S_4,f')$, where $f' |_{D_4^j} =f$. Since $|\ker(f')|=12$, we know $\ker(f')=A_4$. Since $\ker(f) \subset \ker(f')$ but also since neither $C_4^j$ nor $V_4^j$ is contained in $A_4$, we get a contradiction. Thus, $F$ has no quadratic subfields.

We want to determine when the unique cubic subfield is totally real or of mixed signature. Following the case of $D_6$ in \S \ref{D6}, we need to examine the action of complex conjugation $\mf{c}$ on the set of embeddings of the cubic subfield into $\cc$. Note that $\mf{c}$ should be of the form 
\[\text{$(e_1,e_2)$, where $e_1\in\{C_2^{11},C_2^{21}, C_2^{31}, C_2^1,\cdots, C_2^6\}$ and $e_2\in C_2$;}\quad (e_1,e_2)\neq (1,1).\]
 Since $F$ is totally imaginary, the action of $\mf{c}$ on the set of cosets $gH$ is nontrivial (and in particular exchanges pairs of cosets in $G/H$). This implies 
\[\mf{c}\not\in H \quad \quad \quad (*).\] 
Let $H_c = D_4^j\times C_2$ be the subgroup corresponding to the cubic subfield, then the analysis in \ref{D6} tells us that
\[\text{$F_c$ is totally real }\iff \mf{c}\in\bigcap_{g\in G} gH_cg^{-1}\Leftrightarrow  \mf{c}\in V_4^n\times C_2\quad \quad \quad (**).\]

First, consider the case when $\ker(f)=C_4^j$. Without loss of generality, take $j=1$. Among all the pairs $(e_1,e_2)$, we know from the information of $\ker(f)$ combined with the group lattice that those in the following subsets (excluding the element $(1,1)$) satisfy $(*)$:
\begin{enumerate}[label=(\roman*)]
    \item $C_2^{11}\times \{-1\}$;
    \item $C_2^{j'1}\times \{1\}$, $j'=2,3$;
    \item $C_2^i\times\{1\}$, $i=1,2$;
    \item $C_2^i\times\{\pm 1\}$, $i=3,\dots, 6$.
\end{enumerate}
Among these, case (i) and case (ii) satisfy $(**)$. Thus, when $\mf{c}$ is in case (i) or (ii), $F_c$ is totally real and $F$ is a $CM$-field. 
If $\mf{c}$ is in case (iii) or (iv), $F_c$ is of mixed signature and $F$ is of {\bf TR}-type.

Now consider the case when $\ker(f)=V_4^j$. Without loss of generality we can take $j=1$. Among all the pairs $(e_1,e_2)$, we know from the information of $\ker(f)$ combined with the group lattice that those in the following subsets (excluding the element $(1,1)$) satisfies $(*)$:
\begin{enumerate}[label=(\roman*)]
    \item $C_2^{11}\times \{-1\}$;
    \item $C_2^{j'1}\times \{1\}$, $j'=2,3$;
    \item $C_2^i\times\{-1\}$, $i=1,2$;
    \item $C_2^i\times\{\pm 1\}$, $i=3,\dots, 6$.
\end{enumerate}
Among these, case (i) and case (ii) satisfy $(**)$. Thus, when $\mf{c}$ is in case (i) or (ii), $F_c$ is totally real and $F$ is $CM$. If $\mf{c}$ is in case (iii) or (iv), $F_c$ is of mixed signature and $F$ is of {\bf TR}-type.

\smallskip

To summarize in general for $j =1,2,3$, if $\mf{c}=(e_1,-1)$ with $e_1\in C_2^{j1}$ or $\mf{c}=(e_1,1)$ with $e_1\in C_2^{j'1}$ ($j'\neq j$), then $F$ is CM. Otherwise, $F$ is of \textbf{TR}-type.

\smallskip

One may ask for the analogue of Prob.\,\ref{prob:D6} for $S_4 \times C_2$-extensions:
\begin{prob}
\label{prob:S4xC2}
When ordered by discriminant, determine what percentage of sextic totally imaginary fields with Galois group $S_4 \times C_2$ are CM-fields. 
\end{prob}

\subsubsection{$G = C_3^2 \rtimes D_4$}
If $\gal(F_s/\qq)\cong C_3^2\rtimes D_4$, then $H=\gal(F_s/F)$ corresponds to an order 12 subgroup of $C_3^2\rtimes D_4$. We first exhibit a particular presentation of $C_3^2 \rtimes D_4$ in $S_6$. Denote the $6$ letters to permute as $\{1,2,3,a,b,c\}.$ 
Let 
\[C_3^2=\langle(123)\rangle\times\langle(abc)\rangle \qquad 
D_4=\langle(1a2b)(3,c)\rangle\rtimes\langle(ab)\rangle\]
It is easily seen that $r=(1a2b)(3,c)$ is the rotation element and $s=(ab)$ is the reflection element. The action of $D_4$ on $C_3^2$ can be explicated as:
\[r(123)r^{-1}=(1a2b)(3c)(123)(1b2a)(3c)=(abc),\qquad s(123)s=(123),\]
\[r(abc)r^{-1}=(1a2b)(3c)(abc)(1b2a)(3c)=(132),\qquad s(abc)s =(acb).\]
In order to classify all order 12 subgroups of $G$, we need the following seemingly well-known result\footnote{https://kconrad.math.uconn.edu/blurbs/grouptheory/group12.pdf}
classifying groups of order $12$: 

\begin{prop}
    Every group of order 12 is isomorphic to one of the following:
    \begin{enumerate}[label=(\roman*)]
        \item $\zz/12\cong \zz/3\times\zz/4$, 
        \item $(\zz/2)^2\times\zz/3$, 
        \item $(\zz/2)^2\rtimes\zz/3$ (one isomorphism class), 
        \item $\zz/3\rtimes (\zz/2)^2$ (one isomorphism class), 
        \item $\zz/3\rtimes\zz/4$ (one isomorphism class). 
    \end{enumerate}
\end{prop}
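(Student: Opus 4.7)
The plan is to apply Sylow's theorems and then classify the possible semidirect products. Let $G$ be a group of order $12 = 2^2 \cdot 3$ and let $n_p$ denote the number of Sylow $p$-subgroups. Sylow's theorems give $n_3 \in \{1,4\}$ and $n_2 \in \{1,3\}$, and I would split the argument according to whether $n_3 = 4$ or $n_3 = 1$.

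If $n_3 = 4$, then the four Sylow $3$-subgroups pairwise intersect trivially and so account for $4 \cdot 2 = 8$ elements of order $3$. The remaining $4$ elements must therefore comprise the unique Sylow $2$-subgroup $P$, so $n_2 = 1$. A cyclic $P \cong \zz/4$ would force any action of a Sylow $3$-subgroup $Q$ on $P$ to be trivial (since $|\aut(\zz/4)|=2$ is coprime to $3$), giving $G \cong \zz/12$ and contradicting $n_3 = 4$; hence $P \cong V_4$ and $G \cong V_4 \rtimes \zz/3$ with a nontrivial action. All nontrivial homomorphisms $\zz/3 \to \aut(V_4) \cong S_3$ land in the unique subgroup of order $3$ of $S_3$ and are conjugate under $\aut(V_4)$, yielding a single isomorphism class---namely $A_4$---which is case (iii).

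If instead $n_3 = 1$, then $Q \cong \zz/3$ is normal and Schur--Zassenhaus (or a direct counting argument using that $|Q|$ and $|G/Q|$ are coprime) gives $G \cong Q \rtimes P$ for a Sylow $2$-subgroup $P$, determined by a homomorphism $\varphi\colon P \to \aut(Q) \cong \zz/2$. Four sub-cases arise according to $P \in \{\zz/4, V_4\}$ and $\varphi$ trivial or not: the trivial-$\varphi$ cases produce $\zz/12$ and $\zz/3 \times V_4$, namely cases (i) and (ii), while the nontrivial-$\varphi$ cases with $P \cong \zz/4$ and $P \cong V_4$ produce cases (v) and (iv) respectively.

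The only step that is not purely mechanical is the uniqueness of the isomorphism class within each nontrivial semidirect product case. This reduces to the standard observation that $Q \rtimes_{\varphi_1} P$ and $Q \rtimes_{\varphi_2} P$ are isomorphic whenever $\varphi_2 = \varphi_1 \circ \alpha$ for some $\alpha \in \aut(P)$: for $P \cong \zz/4$ there is a unique nontrivial homomorphism to $\zz/2$, while for $P \cong V_4$ the group $\aut(V_4) \cong S_3$ acts transitively on the three nontrivial homomorphisms $V_4 \to \zz/2$ (these being in bijection with the three order-$2$ subgroups of $V_4$). Combined with the Sylow analysis, this exhausts the five possibilities and shows they are pairwise non-isomorphic (e.g. by comparing their centers and Sylow counts).
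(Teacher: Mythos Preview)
Your proof is correct. The paper does not actually supply a proof of this proposition: it is quoted as a ``seemingly well-known result'' with a footnote pointing to Keith Conrad's expository note on groups of order $12$, and is then used as a black box in the $C_3^2\rtimes D_4$ case. Your Sylow-based argument---splitting on $n_3\in\{1,4\}$, identifying the normal Sylow subgroup in each case, and then classifying the possible semidirect products up to precomposition by automorphisms of the acting factor---is exactly the standard route taken in that reference, so you have correctly reconstructed the omitted argument.
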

Since $D_4$ has no element of order 3, an order 3 element of $G$ can only come from the $C_3^2$ component. Thus, type (ii) and (iii) cannot exist in the ambient group. Moreover, since an element of order 2 or 4 can only come from the $D_4$ component and the action of $D_4$ on $C_3^2$ is not trivial, $H$ can only be of type (iv) or (v).

Thus, we know that $H$ is of the form $A\rtimes K$ with $A\leq C_3^2$, $|A|=3$ and $K\leq D_4$, $|K|=4$. The choices for $A$ are:
$C_3\times\{1\},$ $\{1\}\times C_3,$ or $\Delta_{C_3,f}= \{(x,f(x))\mid x\in C_3\}$ for some isomorphism $f:C_3\to C_3.$  
There are two distinct choices of $f$. Suppose $f:C_3=\langle x\rangle\to C_3=\langle y\rangle$. Then there exist $f_1$ and $f_2$ characterised by 
$f_1(x)=y$ and $f_2(x)=y^2$. Therefore, in all, there 
are four choices for $A$: 
\[
C_3\times\{1\}, \quad \{1\}\times C_3, \quad \Delta_{C_3,f_1}, \quad \Delta_{C_3,f_2}.
\]
It is easy to that possible choices for $K$ are: 
\[\langle r\rangle,\quad V_1 = \langle r^2\rangle\times\langle s\rangle,\quad V_2 = \langle r^2\rangle\times\langle sr\rangle.\]

We want to see if various $K$ acts on the different choices of $A$ by conjugation. By explicit computation using the action of $D_4$ explicated above, we have obtain the following list of feasible $K$ for a given $A$: 
\begin{equation}
\label{eqn:feasible-a-k}
\begin{tabular}{|c|c|}
    \hline
    $A$ & $K$ \\
    \hline
    $C_3\times \{1\}$ & $V_1$\\
    \hline
    $\{1\}\times C_3$ & $V_1$\\
    \hline
    $\Delta_{C_3,f_1}$ & $V_2$\\
    \hline
    $\Delta_{C_3,f_2}$ & $V_2$\\
    \hline
\end{tabular}
\end{equation}
Let $H=A\rtimes K$ with one of the pair $(A,K)$ in the table. We want to find the possible subgroups $L$ such that $H\subsetneq L \subsetneq G=C_3^2\rtimes D_4$.

First note that we can have $L=C_3^2\rtimes K$, which cuts out a quadratic subfield of $F$ since $[L:H]=3$; this quadratic subfield is necessarily imaginary and so $F$ is 
of {\bf CM}-type. 

Next, we claim that $G$ does not have a proper subgroup $L$ with $[L:H]=2$, i.e., $F$ does not have any cubic subfields. 
Suppose otherwise, then we must have the following commutative diagram. There exist sections in the top and bottom rows because $G$ and $H$ are semidirect products.
\begin{figure}[H]
    \centering
    \begin{tikzcd}
        1\ar[r] & C_3^2\ar[r] & G\ar[r,"\pi_2"] & D_4\ar[r]\ar[l,bend left=20,"s"] & 1\\
        1\ar[r] & C_3^2\cap L\ar[r]\ar[d,equal]\ar[u,hook] & L\ar[r,"\pi_2"]\ar[u,hook] & D_4\ar[r]\ar[u,equal] & 1\\
        1\ar[r] & A\ar[r] & H\ar[r,"\pi_2"]\ar[u,hook] & K\ar[r]\ar[l,bend left=20,"s"]\ar[u,hook] & 1
    \end{tikzcd}
\end{figure}

 We want to show that $L$ is also a semidirect product, i.e., there exists a section $s':D_4\hookrightarrow L$ compatible with $s:D_4\to G$. Suppose, by the way of seeking a contradiction, that such an $s'$ does not exist. We will separate the analysis into two cases based on the choice of $K$.

First consider $K=V_1=\langle s,r^2\rangle$, then $A= C_3\times\{1\}$ or $\{1\}\times C_3$ based on the feasible pairs of $(A,K)$ as in \eqref{eqn:feasible-a-k}. To say that $s'$ does not exist is to say that $s(D_4)\not\subset L$. Since from the commutative diagram we know that $s(V_1)\subset L$, we must have $(1,sr)\not\in L$. Since $L\twoheadrightarrow D_4$, there exists some element $(x,sr)\in L$ with $x\neq 1$ and $x\in A$. Moreover, we must have $(x,sr)^2\in H$ by the assumption on index. We check which choices of $x$ satisfy these criteria. For simplicity let $\alpha=(123)$ and $\beta=(abc)$.
$$
\begin{tabular}{|c|c|c|}
    \hline
    choice of $(x,sr)$ & $(x,sr)^2$& $(x,sr)^2\in H$? \\
    \hline
    $(\alpha,sr)$ & $(\alpha,sr)^2=(\alpha sr\alpha r^{-1}s^{-1},1)=(\alpha\beta^2,1)$& no\\
    \hline
    $(\alpha^2,sr)$ & $(\alpha^2,sr)^2=(\alpha^2 sr\alpha^2 r^{-1}s^{-1},1)=(\alpha^2\beta,1)$& no\\
    \hline
    $(\beta,sr)$ & $(\beta,sr)^2=(\beta sr\beta r^{-1}s^{-1},1)=(\beta\alpha^2,1)$& no\\
    \hline
    $(\beta^2,sr)$ & $(\beta^2,sr)^2=(\beta^2 sr\beta^2 r^{-1}s^{-1},1)=(\beta^2\alpha,1)$& no\\
    \hline
\end{tabular}
$$
Thus, the element $(x,sr)\in L$ as required cannot exist. Therefore, $s'$ exists and $L=A\rtimes D_4$.

Next consider $K=V_2=\langle sr,r^2\rangle$, then $A= \Delta_{C_3,f_1}=\langle \alpha\beta\rangle$ or $\Delta_{C_3,f_2}=\langle \alpha\beta^2\rangle$ based on the feasible pairs of $(A,K)$ 
as in \eqref{eqn:feasible-a-k}. To say that $s'$ does not exist is to say that $s(D_4)\not\subset L$. Since from the commutative diagram we know that $s(V_2)\subset L$, we must have $(1,s)\not\in L$. Since $L\twoheadrightarrow D_4$, there exists some element $(x,s)\in L$ with $x\neq 1$ and $x\in A$. Moreover, we must have $(x,s)^2\in H$ by the assumption on index. We check which choices of $x$ satisfy these criteria.
$$
\begin{tabular}{|c|c|c|}
    \hline
    choice of $(x,s)$ & $(x,s)^2$& $(x,s)^2\in H$? \\
    \hline
    $(\alpha\beta,s)$ & $(\alpha\beta,s)^2=(\alpha\beta s\alpha\beta s^{-1},1)=(\alpha^2,1)$& no\\
    \hline
    $(\alpha^2\beta^2,s)$ & $(\alpha^2\beta^2,s)^2=(\alpha^2\beta^2 s\alpha^2\beta^2s^{-1},1)=(\alpha,1)$& no\\
    \hline
    $(\alpha\beta^2,s)$ & $(\alpha\beta^2,s)^2=(\alpha\beta^2 s\alpha\beta^2s^{-1},1)=(\alpha^2,1)$& no\\
    \hline
    $(\alpha^2\beta,s)$ & $(\alpha^2\beta,s)^2=(\alpha^2\beta s\alpha^2\beta s^{-1},1)=(\alpha,1)$& no\\
    \hline
\end{tabular}
$$
Thus, the element $(x,s)\in L$ as required cannot exist. Therefore, $s'$ exists and $L=A\rtimes D_4$.

From our analysis above, any subgroup $L$ with $[L:H]=2$ must be of the form $L=A\rtimes D_4$. However, from the table we see that $D_4$ cannot stabilize any choice of $A$ since 
$\langle r\rangle$ does not act on any $A$ by conjugation. Thus, $F$ does not have any cubic subfields.

\smallskip
In conclusion, any totally imaginary sextic field $F$ with $\gal(F_s/\qq) = C_3^2 \rtimes D_4$ has a unique subfield which is imaginary quadratic and hence $F$ is of {\bf CM}-type but not 
a CM-field. 

\medskip
\subsection{To summarise:} The following table is a summary of the sextic totally imaginary fields: 

$$
\begin{tabular}{|l|p{4cm}|c|c|c|}
    \hline
    Galois group & Subfield & CM & \textbf{CM}-type 
    but not CM & \textbf{TR}-type\\
    \hline
    $C_6$ & imaginary quadratic, totally real cubic & $\bullet$ & &\\
    \hline
    $S_3$ & imaginary quadratic, mixed signature cubic &  &$\bullet$ &\\
     \hline
    $D_6, \ \ {\rm Case \ (i)}$ & imaginary quadratic, totally real cubic & $\bullet$ & &\\
    \hline
    $D_6, \ \ {\rm Case \ (ii)}$ & imaginary quadratic, mixed signature cubic &  & $\bullet$&\\
    \hline
    $A_4\times C_2$ & totally real cubic & $\bullet$ & &\\
    \hline
    $S_4\times C_2, \ \ {\rm Case \ (i)}$ & totally real cubic & {$\bullet$} & &\\
    \hline
    $S_4\times C_2, \ \ {\rm Case \ (ii)}$ & mixed signature cubic &  & &{$\bullet$}\\
     \hline
    $S_3\times C_3$  & imaginary quadratic &  & $\bullet$&\\
    \hline
    $C_3^2\rtimes D_4$ & imaginary quadratic &  &$\bullet$ &\\
    \hline
    $S_3\times S_3$ & imaginary quadratic &  &$\bullet$ &\\
       \hline
    $S_4$ & mixed signature cubic &  & &$\bullet$\\
    \hline
    $S_5\cong PGL_2(\fcy{F}_5)$ & None &  & &$\bullet$\\
    \hline
    $S_6$ & None &  & &$\bullet$\\
    \hline
\end{tabular}
$$

\newpage
\appendix
\section{Lattice of subgroups for $D_4$, $D_6$, and $S_4$}
\label{sec:app}
\begin{figure}[h]
    \centering
    \begin{tikzpicture}[node distance=2cm,line width=0.5pt]
\node(D4) at (0,0)     {$D_4$};
\node(V4)       [below right =1cm and 2cm of D4] {$V_4^N=\langle r^3s, r^2\rangle$};
\node(V4N)      [below left =1cm and 2cm of D4]  {$V_4^N=\langle r^2, s\rangle$};
\node(C4)      [below = 1cm of D4]       {$C_4=\langle r\rangle$};
\node(C21)      [below left =1cm and 1cm of V4N]       {$C_2=\langle r^2s\rangle$};
\node(C22)      [below =1cm of V4N]       {$C_2=\langle s\rangle$};
\node(C2)      [below =1cm of C4]       {$C_2=\langle r^2\rangle$};
\node(C23)      [below =1cm of V4]      {$C_2=\langle r^3s\rangle$};
\node(C24)      [below right =1cm and 1cm of V4]      {$C_2=\langle rs\rangle$};
\node(1)        [below=1cm of C2]   {$\left\{1\right\}$};
\draw(D4)       -- (V4);
\draw(D4)       -- (C4);
\draw(D4)       -- (V4N);
\draw(V4N)      -- (C21);
\draw(V4N)      -- (C22);
\draw(V4N)      -- (C2);
\draw(C4)      -- (C2);
\draw(V4)      -- (C2);
\draw(V4)      -- (C23);
\draw(V4)      -- (C24);
\draw(1)      -- (C24);
\draw(1)      -- (C23);
\draw(1)      -- (C22);
\draw(1)      -- (C21);
\draw(1)      -- (C2);
\end{tikzpicture}
    \caption{Subgroup Lattice of $D_4$}
    \label{fig:lattice-d4}
\end{figure}

\begin{figure}[h]
    \centering
    \begin{tikzpicture}[node distance=2cm,line width=0.5pt]
\node(D6) at (0,0)     {$D_6$};
\node(O61)       [below right =1cm and 2cm of D4] {$\langle r^2, sr\rangle$};
\node(O62)      [below left =1cm and 2cm of D4]  {$\langle r^2, s\rangle$};
\node(O63)      [below = 1cm of D6]       {$\langle r\rangle$};
\node(O41)      [below left =1cm and 0.5cm of O63]       {$\langle r^3, s\rangle$};
\node(O42)      [below right =1cm and 0.5cm of O63]       {$\langle r^3, sr^2\rangle$};
\node(O43)      [below right =1cm and 3.5cm of O61]       {$\langle r^3, sr\rangle$};
\node(O3)      [below left =2cm and 0.25cm of O63]       {$\langle r^2\rangle$};
\node(O21)      [below =3cm of O62]       {$\langle  sr^2\rangle$};
\node(O22)      [left =1cm of O21]       {$\langle  sr^4\rangle$};
\node(O23)      [left =1cm of O22]       {$\langle  s\rangle$};
\node(O2)      [right =2cm of O21]       {$\langle  r^3\rangle$};
\node(O24)      [right =2cm of O2]       {$\langle  sr^5\rangle$};
\node(O25)      [right =1cm of O24]       {$\langle  sr^3\rangle$};
\node(O26)      [right =1cm of O25]       {$\langle  sr\rangle$};
\node(1)        [below=2cm of O3]   {$\left\{1\right\}$};
\draw(D6)       -- (O61);
\draw(D6)       -- (O62);
\draw(D6)       -- (O63);
\draw(D6)       -- (O41);
\draw(D6)       -- (O42);
\draw(D6)       -- (O43);
\draw(O61)       -- (O24);
\draw(O61)       -- (O25);
\draw(O61)       -- (O26);
\draw(O61)       -- (O3);
\draw(O62)       -- (O21);
\draw(O62)       -- (O22);
\draw(O62)       -- (O23);
\draw(O62)       -- (O3);
\draw(O63)       -- (O3);
\draw(O63)       -- (O2);
\draw(O41)       -- (O2);
\draw(O41)       -- (O25);
\draw(O41)       -- (O23);
\draw(O42)       -- (O2);
\draw(O42)       -- (O21);
\draw(O42)       -- (O24);
\draw(O43)       -- (O2);
\draw(O43)       -- (O22);
\draw(O43)       -- (O26);
\draw(1)      -- (O2);
\draw(1)      -- (O21);
\draw(1)      -- (O22);
\draw(1)      -- (O23);
\draw(1)      -- (O24);
\draw(1)      -- (O25);
\draw(1)      -- (O26);
\draw(1)      -- (O3);
\end{tikzpicture}
    \caption{Subgroup Lattice of $D_6$}
    \label{fig:lattice-d6}
\end{figure}

\pagebreak

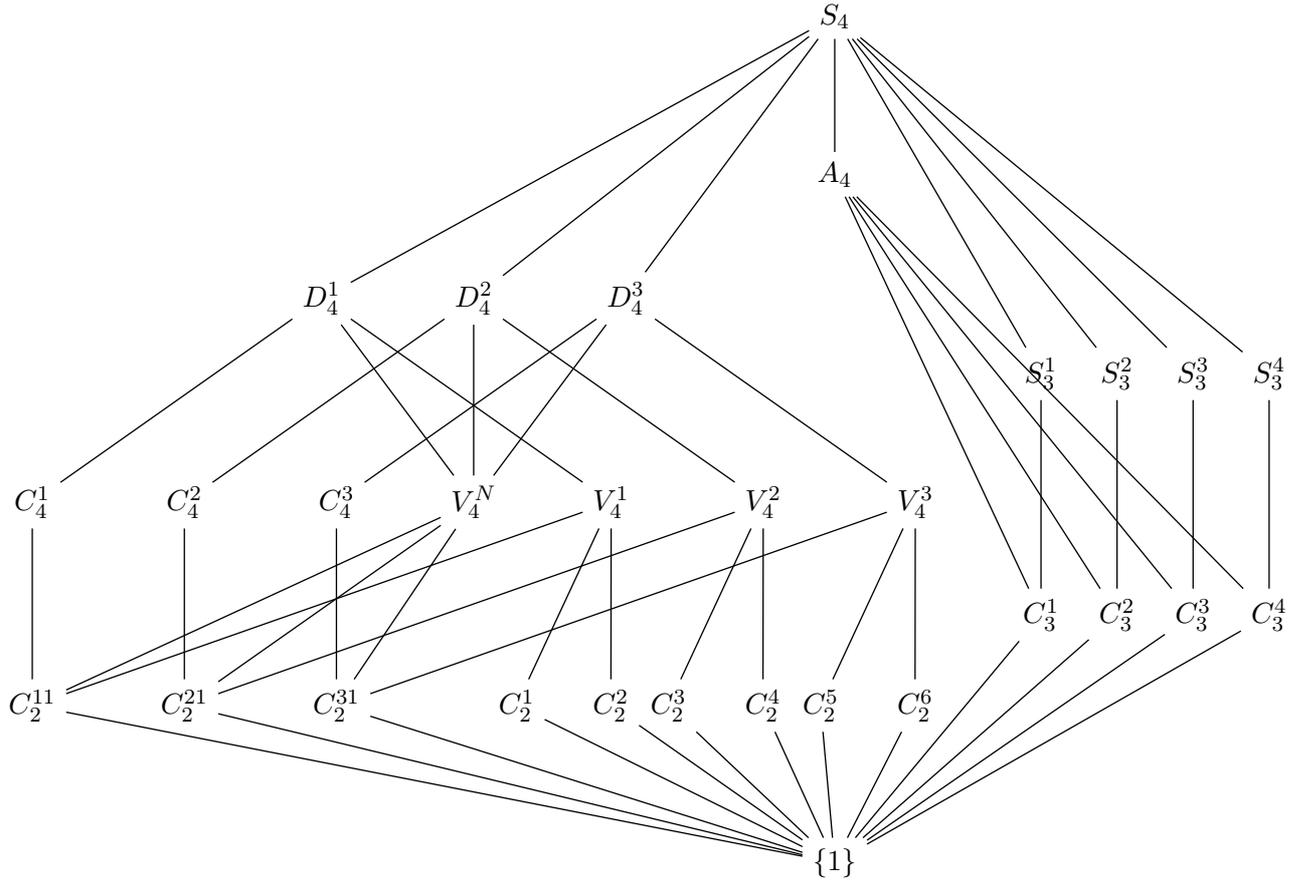
\begin{figure}[h]
    \centering
    \begin{tikzpicture}[node distance=2cm,line width=0.5pt]
\node(S4) at (0,0)     {$S_4$};
\node(A4)       [below =1.5cm of S4] {$A_4$};
\node(D41)  [below left = 1cm and 6cm of A4] {$D_4^1$};
\node(D42)  [below left = 1cm and 4cm of A4] {$D_4^2$};
\node(D43)  [below left = 1cm and 2cm of A4] {$D_4^3$};
\node(S31)  [below right = 2cm and 2cm of A4] {$S_3^1$};
\node(S32)  [below right = 2cm and 3cm of A4] {$S_3^2$};
\node(S33)  [below right = 2cm and 4cm of A4] {$S_3^3$};
\node(S34)  [below right = 2cm and 5cm of A4] {$S_3^4$};
\node(V4N)  [below = 2cm of D42] {$V_4^N$};
\node(C43)  [left = 1cm of V4N] {$C_4^3$};
\node(C42)  [left = 3cm of V4N] {$C_4^2$};
\node(C41)  [left = 5cm of V4N] {$C_4^1$};
\node(V43)  [right = 5cm of V4N] {$V_4^3$};
\node(V42)  [right = 3cm of V4N] {$V_4^2$};
\node(V41)  [right = 1cm of V4N] {$V_4^1$};
\node(C211) [below = 2cm of C41] {$C_2^{11}$};
\node(C221) [below = 2cm of C42] {$C_2^{21}$};
\node(C231) [below = 2cm of C43] {$C_2^{31}$};
\node(C21)  [below left = 2cm and 0.5cm of V41] {$C_2^1$};
\node(C22)  [right = 0.5cm of C21] {$C_2^2$};
\node(C23)  [below left = 2cm and 0.5cm of V42] {$C_2^3$};
\node(C24)  [right = 0.5cm of C23] {$C_2^4$};
\node(C25)  [below left = 2cm and 0.5cm of V43] {$C_2^5$};
\node(C26)  [right = 0.5cm of C25] {$C_2^6$};
\node(C31)  [below = 2.5cm of S31] {$C_3^1$};
\node(C32)  [below = 2.5cm of S32] {$C_3^2$};
\node(C33)  [below = 2.5cm of S33] {$C_3^3$};
\node(C34)  [below = 2.5cm of S34] {$C_3^4$};
\node(C1)   [below = 8.5cm of A4] {$\{1\}$};
\draw(S4)       -- (A4);
\draw(S4)       -- (D41);
\draw(S4)       -- (D42);
\draw(S4)       -- (D43);
\draw(S4)       -- (S31);
\draw(S4)       -- (S32);
\draw(S4)       -- (S33);
\draw(S4)       -- (S34);
\draw(D41)       -- (V4N);
\draw(D42)       -- (V4N);
\draw(D43)       -- (V4N);
\draw(D41)       -- (C41);
\draw(D42)       -- (C42);
\draw(D43)       -- (C43);
\draw(D41)       -- (V41);
\draw(D42)       -- (V42);
\draw(D43)       -- (V43);
\draw(C41)       -- (C211);
\draw(V4N)       -- (C211);
\draw(V41)       -- (C211);
\draw(C42)       -- (C221);
\draw(V4N)       -- (C221);
\draw(V42)       -- (C221);
\draw(C43)       -- (C231);
\draw(V4N)       -- (C231);
\draw(V43)       -- (C231);
\draw(V41)       -- (C21);
\draw(V41)       -- (C22);
\draw(V42)       -- (C23);
\draw(V42)       -- (C24);
\draw(V43)       -- (C25);
\draw(V43)       -- (C26);
\draw(S31)       -- (C31);
\draw(A4)       -- (C31);
\draw(S32)       -- (C32);
\draw(A4)       -- (C32);
\draw(S33)       -- (C33);
\draw(A4)       -- (C33);
\draw(S34)       -- (C34);
\draw(A4)       -- (C34);
\draw(C1)       -- (C211);
\draw(C1)       -- (C221);
\draw(C1)       -- (C231);
\draw(C1)       -- (C21);
\draw(C1)       -- (C22);
\draw(C1)       -- (C23);
\draw(C1)       -- (C24);
\draw(C1)       -- (C25);
\draw(C1)       -- (C26);
\draw(C1)       -- (C31);
\draw(C1)       -- (C32);
\draw(C1)       -- (C33);
\draw(C1)       -- (C34);
\end{tikzpicture}
    \caption{Subgroup Lattice of $S_4$ (subgroups of $S_3$ omitted)}
    \label{fig:lattice-s4}
\end{figure}

\end{document}